\newtheorem{thm}{Theorem}[section]
\newtheorem{proposition}[thm]{Proposition}
\theoremstyle{definition}
\newtheorem{remark}[thm]{Remark}
\newtheorem{defn}[thm]{Definition}
\newtheorem{example}[thm]{Example}
\def\N{\mathbb{N}}
\def\Z{\mathbb{Z}}
\def\ZZ{\mathcal Z}
\def\BB{\mathcal B}
\def\leq{\leqslant}
\def\geq{\geqslant}
\def\ra{\rightarrow}
\newcommand{\Tot}{{ \rm Tot}}
\newcommand{\Ker}{{\rm Ker}}
\newcommand{\Zzw}{\mathcal{ZW}}
\begin{document}
\title{On the spectral sequence associated to a multicomplex}
\author{Muriel Livernet}
\address[M.~Livernet]{
Univ Paris Diderot, Institut de Math\'ematiques de Jussieu-Paris Rive Gauche, CNRS, Sorbonne Universit\'e, 8 place Aur\'elie Nemours, F-75013, Paris, France}
\email{livernet@math.univ-paris-diderot.fr}

\author{Sarah Whitehouse}
\address[S.~Whitehouse]{School of Mathematics and Statistics\\ 
University of Sheffield\\ S3 7RH\\ England}
\email{s.whitehouse@sheffield.ac.uk}
\author{Stephanie Ziegenhagen}
%\address[S.~Ziegenhagen]{}
\email{stephanie.ziegenhagen@web.de}
\keywords{spectral sequence, multicomplex, twisted chain complex}
\subjclass[2000]{
18G40, %Spectral sequences, hypercohomology 
18G35%Chain complexes
}

\begin{abstract}
A multicomplex, also known as a twisted chain complex, has an associated spectral
sequence via a filtration of its total complex. 
We give explicit formulas for all the differentials in this spectral sequence. 
\end{abstract}

\maketitle
\setcounter{tocdepth}{2}
%\tableofcontents

\section{Introduction}
A multicomplex is an algebraic structure generalizing the notion of a (graded) chain complex
and that of a bicomplex. The structure involves a family of higher ``differentials'' indexed by the non-negative integers, and
is also known as a twisted chain complex, or a $D_\infty$-module. 
Multicomplexes have arisen in many different places and play an important role in homotopical and homological algebra.
These objects were first considered by Wall~\cite{Wa61} in his work on resolutions for extensions of 
groups and they were studied by Gughenheim and May~\cite{GM74} in their approach to differential homological algebra.

A multicomplex has an associated total complex, with filtration, and thus an associated spectral sequence. This spectral sequence plays
a key role in the homotopy theory of these objects, as studied in~\cite{CELW18a}. The spectral sequence was studied
by Boardman~\cite{Bo99}, and
by Hurtubise~\cite{Hu10}, who noted that the differentials of the spectral sequence differ from the maps induced by
the higher ``differentials'' of the multicomplex.  The main content of this short note is to give explicit formulas for all the differentials in this spectral sequence. This description generalizes well-known results
in the bicomplex case (see for example~\cite{CFUG97}).

We give some examples, revisiting those given by Hurtubise and Wall, and we briefly note some applications.
In particular, a new application appears in the recent work of Cirici and Wilson~\cite{CW18}.
 They use our description of the $E_2$ page of the spectral sequence, in the case of a multicomplex with only four non-zero structure maps, to introduce and study a new invariant for almost complex manifolds, which generalizes the definition of Dolbeault cohomology for complex manifolds.

\subsection*{Acknowledgements}
We would like to thank Joana Cirici for helpful conversations. We would like to thank the anonymous referee for helping us to clarify our previous version.

\subsection*{Conventions} 
Throughout the paper $k$ will be a commutative unital ground ring.

%%%%%%%%%%%%%%%%%%%%%%%
%%%%%%%%%%%%%%%%%%%%%%%
%%%%%%%%%%%%%%%%%%%%%%%

\section{The spectral sequence associated to a multicomplex}

We begin by introducing multicomplexes, including notation and grading
conventions.

\begin{defn}
\label{def:twchcx}
A \emph{multicomplex} (also called a \emph{twisted chain complex}) is a $(\Z,\Z)$-graded $k$-module $C$ equipped with maps 
$d_i \colon C \ra C$ for $i\geq 0$ of bidegree $\vert d_i \vert = (-i, i-1)$ such that
\[
	\sum_{i+j=n}d_i d_j = 0 \quad \text{for all $n \geq 0.$}
\]
A morphism $f \colon (C, d_i)\ra (C', d_i')$ of multicomplexes is given by maps $f_i \colon C\ra C'$ for $i \geq 0$ of 
bidegree $\vert f_i \vert = (-i,i)$ satisfying
$$\sum_{i+j=n} f_i d_j = \sum_{i+j=n} d'_i f_j \quad \text{for all $n \geq 0$.}$$
\end{defn}

For $C$ a multicomplex and $(a,b)\in\Z\times\Z$, we write $C_{a,b}$ for the $k$-module in bidegree $(a,b)$.

\begin{remark}
Multicomplexes form a category, $\rm{tCh}_k$, with objects and morphisms as in Definition~\ref{def:twchcx}.
Sometimes different sign conventions are adopted. A common alternative is to require the
structure maps to satisfy  the relations
\[
\sum_{i+j=n}(-1)^i d_i d_j = 0 \quad \text{for all $n \geq 0,$}
\]
with a similar sign change for the morphisms. It may be checked that the resulting category is isomorphic
to $\rm{tCh}_k$.

Various other grading conventions may be found, too, such as a single $\N$ or $\Z$ grading, or an $(\N,\Z)$-grading. 
We will discuss where our choice of $(\Z,\Z)$-grading is significant below.
\end{remark}

\begin{remark}
It is shown in ~\cite[10.3.17]{LV12} (singly graded version) and in ~\cite{LRW13} (bigraded version) that multicomplexes
can be viewed as $D_\infty$-algebras, where $D$ is the operad of dual numbers. This point of view is also related to
the work of Lapin~\cite{La01}.
\end{remark}

\begin{example}
If the structure maps of a multicomplex satisfy $d_i = 0$ for $i\geq 1$, we retrieve the notion of a chain complex with an additional grading, sometimes referred to as a vertical bicomplex. If $d_i = 0$ for $i\geq 2$, we retrieve the notion of a bicomplex.
\end{example}

A multicomplex gives rise to a chain complex via totalization. Since we consider $(\Z,\Z)$-gradings it is a priori not clear which is the right notion of total complex in this setting. See~\cite{Me78} for a discussion of this. One could for example associate to a multicomplex $C$ the direct sum total complex with 
$\bigoplus_{a+b =n} C_{a,b}$  in degree $n$. It will turn out that the 
 associated spectral sequence has better convergence properties for the following version of the total complex.

\begin{defn} For a multicomplex $C$, the \emph{associated total complex}  $\Tot C$ is the chain complex with
\[
	(\Tot C)_n = \prod_{\substack{a+b = n\\a\leq 0}} C_{a,b} \oplus \bigoplus_{\substack{a+b=n\\a >0}} C_{a,b} =  \bigoplus_{\substack{a+b = n\\b\leq 0}} C_{a,b} \oplus \prod_{\substack{a+b=n\\b >0}} C_{a,b} .
\]
The differential on $\Tot C$ is given, for $c\in (\Tot C)_n$, by 
\[
(dc)_a =  \sum_{i \geq 0} d_i (c)_{a+i},
\]
where $(c)_a$ denotes the projection of $c$ to $C_{a,*}=\prod_b C_{a,b}$.

Since $(c)_j=0$ for $j$ sufficiently large, the sum above is finite and also $(dc)_a=0$ for sufficiently large $a$, so this formula determines a well-defined map on $\Tot C$.

\end{defn}
\smallskip

Note that it is not possible in general to consider a direct product total complex with  $\prod_{a+b=n} C_{a,b}$ in degree $n$, since in this case the formula above can involve infinite sums.

\medskip

Given a multicomplex $C$, we consider the filtered complex $D$, where
$D:=\Tot C$ filtered by the subcomplexes 
\[
(F_p D)_n =  \prod_{\substack{a+b = n\\a\leq p}} C_{a,b}.
\]

Note that $F_pD=\bigoplus_{i=0}^{r-1} C_{p-i,*}\oplus F_{p-r}D.$
%\begin{equation}\label{decomp}
%F_pD=\bigoplus_{i=0}^{r-1} C_{p-i,*}\oplus F_{p-r}D.
%\end{equation}
Consequently,  an element $x\in F_pD$ can be written
\begin{equation}\label{decomp}
x=(x)_p+(x)_{p-1}+\ldots +(x)_{p-(r-1)}+u
\end{equation}
 with $u\in F_{p-r}D$, where $(x)_{p-i}$ is the projection of $x$ to $C_{p-i,*}$.

\medskip

We consider the spectral sequence associated to this filtered complex, as presented in~\cite[1.3]{De71}. For $r\geq 0$,
the $r$-stage $E_r(D)$ is an $r$-bigraded complex -- that is, a bigraded module endowed with a square zero map $\delta_r$ of bidegree $(-r,r-1)$ --
and may be written as the quotient
\[E_r^{p,*}(D)\cong \ZZ_r^{p,*}(D)/\BB_r^{p,*}(D),\] where the \textit{$r$-cycles} are given by 
\[\ZZ_r^{p,*}(D):=F_pD\cap d^{-1}(F_{p-r}D)\]
and the \textit{$r$-boundaries} are given by $\BB_0^{p,*}(D)=\ZZ_0^{p-1,*}(D)$ and
\[\BB_r^{p,*}(D):=\ZZ_{r-1}^{p-1,*}(D)+ d\ZZ_{r-1}^{p+r-1,*}(D)\text{ for }r\geq 1.\]

Given an element $x\in \ZZ_r^{p,*}(D)$, we will denote by $[x]_r$ its image in $E_r^{p,*}(D)$. For $[x]_r\in E_r^{p,*}(D)$,
we have 
\begin{equation}\label{EMain}
\delta_r([x]_r)=[dx]_r.
\end{equation}

Expanding the expressions $dx\in F_{p-r}D$ and $dc=x$ for some $c\in F_{p+r-1}D$ using the decomposition (\ref{decomp}) above leads to the following definition.

\begin{defn}\label{P:candb}
Let $x\in C_{p,*}$ and let $r\geq 1$. We define subgraded modules $Z_r^{p,*}$ and $B_r^{p,*}$ of $C_{p,*}$
as follows.
 \begin{align*}
   x\in Z_r^{p,*} \quad \Longleftrightarrow &\quad \text{for } 1\leq j\leq r-1,\; \text{there exists } z_{p-j}\in C_{p-j,*} \text{ such that }  \\
   &d_0x=0 \text{ and } d_nx=\sum_{i=0}^{n-1} d_i z_{p-n+i},  \text{ for all } 1\leq n\leq r-1.  \quad (\star_1) \\
 x\in B_r^{p,*}  \quad  \Longleftrightarrow &\quad \text{for } 0\leq k\leq r-1,\; \text{there exists }  c_{p+k}\in C_{p+k,*} \text{ such that }  \\
 &\quad \begin{cases}
     x=\sum_{k=0}^{r-1} d_kc_{p+k}& \text{and} \\
    0= \sum_{k=l}^{r-1} d_{k-l}c_{p+k} & \text{for }1\leq l\leq r-1. 
    \end{cases}\quad (\star_2) 
\end{align*} 
\end{defn}

\begin{proposition}\label{prop:binz}
For $r\geq 1$ and all $p$, we have $B_r^{p,*}\subseteq Z_r^{p,*}$. 
\end{proposition}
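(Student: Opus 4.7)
The plan is to exhibit, for each $x \in B_r^{p,*}$ with witnesses $c_{p+k}$ from $(\star_2)$, explicit elements $z_{p-j}$ that verify $(\star_1)$. I will be guided by the following picture: thinking of $c := c_p + c_{p+1} + \cdots + c_{p+r-1}$ as an element of $F_{p+r-1}D$, the relations $(\star_2)$ are precisely the conditions that force $dc \in F_p D$; the top projection $(dc)_p$ equals $x$, and the lower projections $(dc)_{p-j}$ should serve as the required $z_{p-j}$. Concretely, I will set
\[
z_{p-j} := -\sum_{k=0}^{r-1} d_{j+k}\, c_{p+k} \quad \text{for } 1 \leq j \leq r-1,
\]
which lies in $C_{p-j,*}$ by a direct bidegree check, and then verify $(\star_1)$ by expanding and applying the multicomplex relations.

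For $d_0 x = 0$, I would write $d_0 x = \sum_{k=0}^{r-1} d_0 d_k c_{p+k}$, use $\sum_{i+j=k} d_i d_j = 0$ to replace $d_0 d_k$ by $-\sum_{i=1}^{k} d_i d_{k-i}$, swap the order of summation, and invoke $(\star_2)$ with $l=i$ for each $1 \leq i \leq r-1$, so that each inner sum vanishes. For the relations $d_n x = \sum_{i=0}^{n-1} d_i z_{p-n+i}$ with $1 \leq n \leq r-1$, the same strategy applies: I would expand $d_n x = \sum_{k=0}^{r-1} d_n d_k c_{p+k}$ and use $\sum_{i+j=n+k} d_i d_j = 0$ to express $d_n d_k$ as minus the sum over the remaining $(i,j)$ pairs, then split according to whether $i < n$ or $i > n$.

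The $i < n$ portion, after reindexing via $j = n - i$, matches $\sum_{i=0}^{n-1} d_i z_{p-n+i}$ with the above definition of the $z$'s; the $i > n$ portion, after setting $l = i - n$ and swapping the summation order, takes the shape $\sum_{l=1}^{r-1} d_{l+n}\bigl(\sum_{k=l}^{r-1} d_{k-l} c_{p+k}\bigr)$, whose inner parenthesis vanishes for each $l$ by $(\star_2)$. The only real obstacle is careful bookkeeping of the double sums and the reindexing; once the ansatz for $z_{p-j}$ is fixed, the identities are immediate consequences of the multicomplex relations combined with $(\star_2)$.
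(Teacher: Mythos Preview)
Your proposal is correct and follows exactly the same route as the paper: you define $z_{p-j} = -\sum_{k=0}^{r-1} d_{j+k}\, c_{p+k}$, which is precisely the paper's choice, and the paper then simply asserts that ``direct calculation shows that these elements satisfy $(\star_1)$''. Your write-up supplies that direct calculation in full, and the bookkeeping (splitting into $i<n$ and $i>n$, reindexing via $l=i-n$, and invoking $(\star_2)$ to kill the second portion) is accurate.
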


\begin{proof}
Let $x\in B_r^{p,*}$, with  $c_{p+k}\in C_{p+k,*}$ for
$0\leq k\leq r-1$ satisfying  equations~($\star_2$). Define
\[
z_{p-j}=-\sum_{i=0}^{r-1} d_{j+i}c_{p+i} \in C_{p-j,*},
\]
for $1\leq j\leq r-1$. Direct calculation shows that these elements satisfy ($\star_1$) and thus
$ x\in Z_r^{p,*}$.
\end{proof}

\begin{proposition}\label{prop:quotientiso}
The map 
\[\psi:\ZZ^{p,*}_r(D)/\BB^{p,*}_r(D)\rightarrow Z^{p,*}_r/B^{p,*}_r,\]
sending $[x]_r$ to the class 
 $[(x)_p]$,  is well defined and an isomorphism.
\end{proposition}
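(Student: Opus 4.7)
The plan is to unpack the defining conditions of $\ZZ_r^{p,*}(D)$ and $\BB_r^{p,*}(D)$ component by component and observe that the definitions of $Z_r^{p,*}$ and $B_r^{p,*}$ are precisely tailored to match what the projection $x\mapsto (x)_p$ produces. The key computational input is the formula $(dy)_{p-j}=\sum_{i=0}^{j} d_i(y)_{p-j+i}$, valid for any $y\in F_pD$ and $j\geq 0$, where the upper limit of the sum reflects the fact that $(y)_{q}=0$ for $q>p$.

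For well-definedness on the level of cycles, if $x\in\ZZ_r^{p,*}(D)$ then $(dx)_{p-j}=0$ for $0\leq j\leq r-1$; setting $z_{p-j}:=-(x)_{p-j}$ for $1\leq j\leq r-1$, these equations translate into exactly $(\star_1)$ for $(x)_p$, so $(x)_p\in Z_r^{p,*}$. For descent to the quotient, a general element of $\BB_r^{p,*}(D)$ has the form $y+dc$ with $y\in\ZZ_{r-1}^{p-1,*}(D)\subseteq F_{p-1}D$ and $c\in\ZZ_{r-1}^{p+r-1,*}(D)$; since $(y)_p=0$, we have $(y+dc)_p=(dc)_p=\sum_{k=0}^{r-1} d_k(c)_{p+k}$, which is the first equation of $(\star_2)$ with witnesses $c_{p+k}:=(c)_{p+k}$, while the remaining equations come from $(dc)_{p+l}=0$ for $1\leq l\leq r-1$ (a consequence of $dc\in F_pD$) after the index shift $k=l+i$.

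For surjectivity, given $x\in Z_r^{p,*}$ with witnesses $z_{p-j}$, I would define $\tilde x\in F_pD$ by $(\tilde x)_p=x$, $(\tilde x)_{p-j}=-z_{p-j}$ for $1\leq j\leq r-1$, and zero otherwise; a direct computation combined with $(\star_1)$ then yields $d\tilde x\in F_{p-r}D$, whence $\tilde x\in\ZZ_r^{p,*}(D)$ and $\psi([\tilde x]_r)=[x]$. For injectivity, suppose $x\in\ZZ_r^{p,*}(D)$ has $(x)_p\in B_r^{p,*}$ witnessed by $c_{p+k}$; setting $c:=\sum_{k=0}^{r-1}c_{p+k}$, the equations $(\star_2)$ place $c$ in $\ZZ_{r-1}^{p+r-1,*}(D)$ with $(dc)_p=(x)_p$, and then $x-dc\in F_{p-1}D$ together with $d(x-dc)=dx\in F_{p-r}D$ (using $d^2=0$) gives $x-dc\in \ZZ_{r-1}^{p-1,*}(D)$, so $x=(x-dc)+dc\in\BB_r^{p,*}(D)$. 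The main obstacle is purely notational: one must keep careful track of the index ranges and of the sign convention $z_{p-j}=-(x)_{p-j}$, but no further conceptual input is needed because $(\star_1)$--$(\star_2)$ were designed precisely to match the componentwise unpacking of the filtration-level conditions.
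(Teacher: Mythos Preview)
Your proposal is correct and follows essentially the same approach as the paper's proof: the paper likewise defines the lift $\hat\psi(x)=[(x)_p]$, checks that $(x)_p\in Z_r^{p,*}$ via the witnesses $z_{p-j}=-(x)_{p-j}$, proves surjectivity by the analogous reverse construction, and identifies $\Ker\hat\psi=\BB_r^{p,*}(D)$ by building $c=\sum_{k=0}^{r-1} c_{p+k}$ from the $(\star_2)$-witnesses and observing $x-dc\in \ZZ_{r-1}^{p-1,*}(D)$. The only organizational difference is that the paper packages the two directions of well-definedness as the computation of $\Ker\hat\psi$, whereas you separate ``descent to the quotient'' from ``injectivity''; the content is identical.
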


\begin{proof}
Define
\[\hat\psi: \ZZ_r^{p,*}(D)\rightarrow Z^{p,*}_r/B^{p,*}_r\]
 by $\hat\psi(x)=[(x)_p]$. To see that $(x)_p\in Z^{p,*}_r$, note that $dx\in F_{p-r}D$ implies that $(dx)_{p-n}=0$ for all $0\leq n\leq r-1$. Therefore $d_0(x)_p=0$ and
 \[ d_n(x)_p+\sum_{i=0}^{n-1}d_i(x)_{p-n+i}=(dx)_{p-n}=0,
 \]
 for all $1\leq n\leq r-1$. So, taking $z_{p-n+i}=-x_{p-n+i}$ in Definition \ref{P:candb}, we see $(x)_p\in Z^{p,*}_r$ and a similar argument proves that $\hat\psi$ is surjective.

Let us compute its kernel. Let $x=(x)_p+w \in\Ker\,\hat\psi$, with $w\in F_{p-1}D$. By assumption $(x)_p\in  B^{p,*}_r$, and hence for $0\leq k\leq r-1$ there exists $c_{p+k}\in C_{p+k,*}$ such that
\[ \begin{cases}
     (x)_p=\sum_{k=0}^{r-1} d_kc_{p+k}& \text{and} \\
    0= \sum_{k=l}^{r-1} d_{k-l}c_{p+k}, & \text{for }1\leq l\leq r-1. 
    \end{cases}
    \]
 Let $c=\sum_{k=0}^{r-1} c_{p+k}\in F_{p+r-1}D$. The above relations imply that $(dc)_{p+l}=0$ for all $1\leq l\leq r-1$, and $(dc)_p=(x)_p$. Therefore, $dc\in F_pD$ and $c\in \ZZ_{r-1}^{p+r-1,*}(D)$. In addition, $(x)_p-dc\in 
 F_{p-1}D$, and $x=dc+\rho$, where $\rho=(x)_p-dc+w\in F_{p-1}D$. Then $d^2c=0$ implies that $dx=d\rho\in F_{p-r}D$, and hence 
 $\rho\in\ZZ_{r-1}^{p-1,*}(D)$. Thus $\Ker\,\hat\psi\subseteq   \BB_r^{p,*}(D).$

Conversely if $x\in  \BB_r^{p,*}(D)$, then $x=\rho+dc$ for some  $\rho\in\ZZ_{r-1}^{p-1,*}(D)$ and some $c\in \ZZ_{r-1}^{p+r-1,*}(D)$. So, $\rho\in F_{p-1}D$ and $dc\in F_pD$. Thus, $(x)_p=(dc)_p$
and $(dc)_s=0$ for all $s>p$. This implies that $(x)_p\in B^{p,*}_r$ and $\BB_r^{p,*}(D)\subseteq \Ker\,\hat\psi$.
\end{proof}

\begin{remark}
In the language of \emph{witnesses} adopted in~\cite{CELW18b}, the difference between the 
$\ZZ_r(D)$-cycles and the $Z_r$-cycles is essentially the difference between specifying witnesses and just  
 requiring the existence of them. More precisely, 
$\ZZ^{p,*}_r(D)/F_{p-r}(D)$ corresponds to the
\emph{witness $r$-cycles} for split filtered complexes.
\end{remark}

\begin{thm}\label{th:explicitDiffs}
Under the isomorphism $\psi$ of Proposition~\ref{prop:quotientiso}, the $r$-th differential of the spectral sequence
corresponds to the map $\Delta_r: Z^{p,*}_r/B^{p,*}_r\to Z_r^{p-r,*}/B_r^{p-r,*}$ given by
\[
  \Delta_r([x])= \left[\;d_rx-\sum_{i=1}^{r-1} d_iz_{p-r+i}\;\right],
\]
 where $x \in Z_r^{p,*}$, and the family $\{z_{p-j}\}_{1\leq j\leq r-1}$ satisfies $(\star_1)$.
\end{thm}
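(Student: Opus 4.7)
The plan is to exploit the isomorphism $\psi$ of Proposition~\ref{prop:quotientiso} together with the identity $\delta_r([y]_r) = [dy]_r$ from~\eqref{EMain}, by constructing for each class in $Z_r^{p,*}/B_r^{p,*}$ an explicit preimage in $\ZZ_r^{p,*}(D)$. Given $x\in Z_r^{p,*}$ with witnesses $\{z_{p-j}\}_{1\leq j\leq r-1}$ satisfying $(\star_1)$, the natural candidate is
\[
\tilde{x} := x - \sum_{j=1}^{r-1} z_{p-j} \in F_p D.
\]
Since $(\tilde{x})_p = x$, once $\tilde{x} \in \ZZ_r^{p,*}(D)$ is verified we will have $\psi([\tilde{x}]_r) = [x]$, so that $[\tilde{x}]_r = \psi^{-1}([x])$.

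The next step is to compute $d\tilde{x}$ component by component, using $(dc)_a = \sum_{i\geq 0} d_i (c)_{a+i}$. For $0\leq n\leq r-1$, only the terms with $a+i\in\{p,p-1,\ldots,p-n\}$ contribute, and after the substitution $i\mapsto n-j$ the relations $(\star_1)$ give $(d\tilde{x})_{p-n} = 0$. This shows $d\tilde{x}\in F_{p-r}D$ and hence $\tilde{x}\in \ZZ_r^{p,*}(D)$. The same calculation at $a=p-r$ yields
\[
(d\tilde{x})_{p-r} = d_r x - \sum_{j=1}^{r-1} d_{r-j} z_{p-j} = d_r x - \sum_{i=1}^{r-1} d_i z_{p-r+i}.
\]
Applying $\psi$ to $\delta_r([\tilde{x}]_r) = [d\tilde{x}]_r$ then gives exactly the stated formula for $\Delta_r([x])$.

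The only loose end is well-definedness: a priori the right-hand side could depend on the chosen representative $x$ and on the witnesses $z_{p-j}$. This comes for free from the argument above, since $\delta_r$ is already well-defined on $\ZZ_r^{p,*}(D)/\BB_r^{p,*}(D)$, and any two lifts $\tilde{x},\tilde{x}'$ arising from different choices differ by an element of $F_{p-1}D\cap d^{-1}(F_{p-r}D)=\ZZ_{r-1}^{p-1,*}(D)\subseteq \BB_r^{p,*}(D)$. I do not expect a serious obstacle; the only real care is the index bookkeeping needed to match the sums in $(d\tilde{x})_{p-n}$ with the two forms of the relations $(\star_1)$, which is a direct reindexing.
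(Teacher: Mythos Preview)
Your proposal is correct and follows exactly the paper's approach: the element $\tilde{x}=x-\sum_{j=1}^{r-1}z_{p-j}$ is precisely the lift $x-z_{p-1}-\cdots-z_{p-r+1}$ that the paper uses, and the computation of $(d\tilde{x})_{p-r}$ via $\psi\circ\delta_r$ and~\eqref{EMain} is the same. Your treatment is in fact slightly more detailed, since you spell out the verification that $\tilde{x}\in\ZZ_r^{p,*}(D)$ and discuss well-definedness explicitly, whereas the paper leaves both implicit.
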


\begin{proof} Since  $\{z_{p-j}\}_{1\leq j\leq r-1}$ satisfies $(\star_1)$,  $[x-z_{p-1}-\ldots-z_{p-r+1}]_r\in\ZZ^{p,*}_r(D)/\BB^{p,*}_r(D)$ and
\[\psi[x-z_{p-1}-\ldots-z_{p-r+1}]_r=[x],
\] where $\psi$ is the isomorphism from Proposition~\ref{prop:quotientiso}. Hence
\begin{align*}
\Delta_r([x])&= \psi\delta_r([x-z_{p-1}-\ldots-z_{p-r+1}]_r)\\
&\stackrel{(\ref{EMain})}{=}\psi[d(x-z_{p-1}-\ldots-z_{p-r+1})]_r\\
&=[(d(x-z_{p-1}-\ldots-z_{p-r+1}))_{p-r}]\\
&=[d_rx-\sum_{i=1}^{r-1} d_iz_{p-r+i}].\qedhere
\end{align*}
\end{proof}

\section{Examples}

We revisit the examples given by Hurtubise~\cite{Hu10} in the light of the explicit description of the differentials.
Hurtubise has the same sign and bidegree conventions as ours, but works with ground ring $\Z$.

The first two examples relate to the bicomplex case, that is multicomplexes with $d_i=0$ for $i\geq 2$. The 
first, \cite[Example 1]{Hu10}, is a ``short staircase'' bicomplex, giving a minimal example of non-trivial $\delta_2$ in the spectral sequence in the bicomplex case.
This may be schematically represented as
\[
\resizebox{2.5cm}{!}{
\xymatrix{
\bullet&\ar[l]\bullet\ar[d]\\
&\bullet&\ar[l]\bullet
}
}
\]
where each  bullet represents a copy of $\Z$ and  each arrow represents the identity map, the vertical one being a 
$d_0$ and the horizontal ones being $d_1$s.
This bicomplex is (up to minor changes of convention) the bicomplex $\Zzw_2$ of~\cite{CELW18b}, a representing object for the witness $2$-cycles.
The 
second example, \cite[Example 2]{Hu10}, generalizes this to a ``long staircase'' bicomplex, giving a minimal example of non-trivial $\delta_r$ in the spectral sequence in the bicomplex case.  It can be pictured as follows.
\[
\resizebox{4cm}{!}{
\xymatrix{
\bullet&\ar[l]\ar[d]\bullet\\
&\bullet&\ar[l]\ar[d]\bullet\\
&&\bullet\ar@{.}[dr]&\\
&&&\bullet&\ar[l]\ar[d]\bullet\\
&&&&\bullet&\ar[l]\bullet
}
}
\]
This corresponds to the bicomplex $\Zzw_r$ of~\cite{CELW18b}, a representing object for the witness $r$-cycles.

In~\cite[Example 3]{Hu10}, the first example is modified by putting
in a non-trivial $d_2$, as indicated, with the effect that the $\delta_2$ of the spectral sequence is then zero. 
\[
\resizebox{2.5cm}{!}{
\xymatrix{
\bullet&\ar[l]\bullet\ar[d]\\
&\bullet&\ar[l]\bullet\ar[ull]
}
}
\]

Finally, \cite[Example 4]{Hu10} is indicated below.
\[
\resizebox{7cm}{!}{
\xymatrix{
\bullet\langle d_1z\rangle\ \bullet\langle d_2y\rangle&\ar[l]_(0.35){\footnotesize \begin{pmatrix}1\\0 
\end{pmatrix}}
\bullet\langle z\rangle\ar[d]^<<<{1}\\
&\bullet\langle d_0z=d_1x\rangle&\ar[l]^(0.45){(1,0)}\bullet\langle x\rangle \ \bullet\langle y\rangle\ar[ull]
}
}
\]
Here the diagonal arrow is $d_2$ given by $\begin{pmatrix}0&0\\0&1\end{pmatrix}$. 
Both $x$ and $y$ give rise to elements of $Z_2$, ``witnessed'' by $z$ for $x$ and by $0$ for $y$, and
our formula for $\Delta_2$ gives
\[
\Delta_2([x])=[-d_1z], \qquad \Delta_2([y])=[d_2y].
\]
It is easy to see that $d_1z\not\in B_2$, so $[-d_1z]\not=0$. So we see that the map induced by $d_2$ and the 
second differential in the spectral sequence are both non-zero
and they are different from each other.
\bigskip

We also revisit the original example given by Wall~\cite{Wa61}. Let the group $G$ be an extension of a normal subgroup
$K$ by its quotient group $H$. Wall shows how to construct (inductively) a free resolution of $G$ from free resolutions 
of $K$ and $H$, via what he calls a ``twisted tensor product''. This resolution has the form of $\Tot C$ for $C$ a multicomplex.

The explicit example given by Wall  is for $G$ a split extension of $K=\Z/r$ by $H=\Z/s$, with generators $x,y$, subject to
relations
\[
x^r=y^s=1, \quad y^{-1}xy=x^t, \quad \text{with } t^s\equiv 1 \bmod r.
\]
Applying his construction to the standard resolutions of the cyclic groups, 
he describes a (first quadrant) multicomplex whose $\Tot$ gives a free resolution for $G$.

Tensoring this over $\Z G$ with $\Z$ one obtains the following multicomplex, with homology of its total complex the group homology
of $G$ with integer coefficients. (Note that we switch over the order of Wall's bigradings, so that conventions match the rest
of this paper.)

For $a\geq 0, b\geq0$, $C_{a,b}$ is a free abelian group on generator $c_{a,b}$ and otherwise $C_{a,b}=0$.

Then, for all $a,b$, writing $T_b=\sum_{j=0}^{s-1} t^{jb}$,
\begin{alignat*}{2}
d_0c_{a,2b-1}&=0, \qquad &d_1c_{2a,2b}&=T_bc_{2a-1, 2b},\\
d_0c_{a,2b}&=rc_{a,2b-1}, \qquad &d_1c_{2a,2b-1}&=-T_bc_{2a-1, 2b-1},\\  
d_1c_{2a+1,2b}&=(t^b-1)c_{2a,2b},\qquad &d_2c_{a,2b}&=0,\\
d_1c_{2a+1,2b-1}&=-(t^b-1)c_{2a,2b-1},\qquad &d_2c_{a,2b-1}&=-\frac{t^{bs}-1}{r}c_{a-2, 2b},
\end{alignat*}
and $d_r=0$ for $r>2$.

As Wall notes, the associated spectral sequence degenerates at the $E_2$ term and he computes the group homology
explicitly. From our point of view, we see that, in any bidegree where $Z_2\neq 0$, the formula for $d_2$  is precisely what it has to be in order for
$\Delta_2$ to be zero. 

In more detail, for $x\in Z_2$, we have $\Delta_2([x])=[d_2x-d_1z]$, where $d_0x=0$ and
$z$ is such that $d_1x=d_0z$. 
If $b>0$ is even, then $Z_2^{a,b}=0$ since $d_0$ from this bidegree is multiplication by $r$ which is injective, so we consider
the other cases.

 Suppose $x=\alpha c_{2a-1,2b-1}\in Z_2^{2a-1,2b-1}$. Then $d_0x=0$ and there is
some $z=\beta c_{2a-2, 2b}$ such that  $d_1x=d_0z$. Now,  
\[
d_1x=d_0z \quad\Longleftrightarrow\quad -(t^b-1)\alpha c_{2a-2,2b-1}=r\beta c_{2a-2,2b-1},
\]
so we see that  such a $z$ exists if and only if $r$ divides $(t^b-1)\alpha$ and
then $\beta=-\frac{(t^b-1)\alpha}{r}$. Then 
\[
d_1z=T_b\beta c_{2a-3,2b}=-\frac{t^{bs}-1}{t^b-1}\frac{(t^b-1)\alpha}{r} c_{2a-3,2b}=-\frac{t^{bs}-1}{r}\alpha c_{2a-3,2b}=d_2x,
\]
so that
$\Delta_2([x])=0$. 

Now suppose $x=\alpha c_{2a,2b-1}\in Z_2^{2a,2b-1}$. Then $d_0x=0$ and there is
some $z=\beta c_{2a-1, 2b}$ such that  $d_1x=d_0z$. 
This time, 
\[
d_1x=d_0z \quad\Longleftrightarrow\quad -T_b\alpha c_{2a-1,2b-1}=r\beta c_{2a-1,2b-1},
\]
so we see that  such a $z$ exists if and only if $r$ divides $T_b \alpha $, and then
 $\beta=-\frac{T_b\alpha}{r}$.
Then
\[
d_1z=(t^b-1)\beta c_{2a-2,2b}=-(t^{b}-1)\frac{T_b\alpha}{r} c_{2a-2,2b}=-\frac{t^{bs}-1}{r}\alpha c_{2a-2,2b}=d_2x,
\]
so again $\Delta_2([x])=0$. 

Finally, we consider $x\in Z_2^{a,0}$. We have $Z_2^{2a, 0}=0$ : if $x=\alpha c_{2a,0}\in Z_2^{2a, 0}$ there must be a $z \in C_{2a-1,1}$ such that
$d_1x=s\alpha c_{2a-1,0}=d_0z=0$ and so $x=0$.

So let $x\in Z_2^{2a-1, 0}$ . Then $d_0x=0$ and picking $z=0$, we have 
$d_0z=d_1x=0$. Then $d_1z=0=d_2x$.

Thus we see that  $\Delta_2([x])=[0]$, for every $x\in Z_2$.

\end{document}